\title{Orbit harmonics for the union of two orbits}
\author{Sean T. Griffin}
\email{stgriffin@ucdavis.edu}
\address{Department of Mathematics, University of California, Davis, One Shields Avenue, Davis, CA 95616}
\date{\today}
\newtheorem{theorem}{Theorem}
\newtheorem{lemma}{Lemma}
\theoremstyle{definition}
\newtheorem{example}{Example}
\renewcommand{\top}{\mathrm{top}}
\newcommand{\Frob}{\mathrm{Frob}}
\begin{document}
\maketitle

\begin{abstract}
Garsia and Procesi, in their study of Springer's representation, proved that the cohomology ring of a Springer fiber is isomorphic to the associated graded ring of the coordinate ring of the $S_n$ orbit of a single point in $\mathbb{C}^n$. This construction was an essential tool in their analysis of the Springer representation, and variations of it have reappeared recently in several other combinatorial and geometric contexts under the name \emph{orbit harmonics}. In this article, we analyze the orbit harmonics of a union of two $S_n$ orbits. We prove that when the coordinate sums of the two orbits are different, the corresponding graded $S_n$ representation is a direct sum of two Springer representations, one of which is shifted in degree by 1.
\end{abstract}

\section{Introduction}

Let $Y\subset \bC^n$ be a finite set of points. Since $Y$ is finite, it is a variety in $\bC^n$ whose defining ideal is the intersection of all the maximal ideals corresponding to each point in $Y$, so $I(Y) = \cap_{y\in Y}(x_1-y_1,x_2-y_2,\dots,x_n-y_n)$ and whose coordinate ring is $A(Y) = \bC[x_1,\dots,x_n]/I(Y)$.

Given a polynomial $f$ in the variables $\{x_1,\dots, x_n\}$, suppose $f = f_0+f_1+\dots+f_d$, where $f_i$ is the $i$th degree component of $f$ and such that $f_d\neq 0$. Let $\top(f) = f_d$, the top degree component of $f$. To the set $Y$ we associate the \emph{top degree ideal} or \emph{associated graded ideal} \[T(Y) = (\top(f) : f\in I(Y)).\] Note that $T(Y)$ is a homogeneous ideal (but will in general not be a monomial idea). We write \[\widehat A(Y) := \bC[x_1,\dots,x_n]/T(Y),\]
which is isomorphic to the associated graded ring of $A(Y)$.

Our interest is primarily in the case when the set $Y$ is closed under the action of the symmetric group $S_n$ permuting coordinates. In this case, $I(Y)$ is an $S_n$-invariant ideal, hence $T(Y)$ is also $S_n$-invariant. Therefore, $\widehat A(Y)$ is a graded $S_n$-module whose dimension over $\bC$ is $|Y|$. Given a finite set $X\subset \bC^n$, let $S_n\langle X\rangle$ be the union of the orbits of the points in $X$ under the $S_n$ action. 

The construction above was used by Garsia and Procesi~\cite{Garsia-Procesi} in their study of the graded $S_n$-module type of the \emph{Springer fiber} $\mathcal{B}_\lambda$ associated to a nilpotent matrix with Jordan type $\lambda$. Their starting point was a presentation of the cohomology ring of a Springer fiber,  
\[H^*(\mathcal{B}_\lambda;\bC) \cong \bC[x_1,\dots, x_n]/I_\lambda,\]
where $I_\lambda$ is the \emph{Tanisaki ideal}. The following theorem was a key step in their analysis.

\begin{theorem}[\cite{Garsia-Procesi}]\label{thm:GPthm}
  Let $P\in \bC^n$ and $Y = S_n\langle P\rangle$, the orbit of a single point. Let $\lambda\vdash n$ be the partition recording the multiplicities of the coordinates of the point P. Then $T(Y)= I_{\lambda}$.
  \end{theorem}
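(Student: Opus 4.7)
The plan is to prove the two containments $I_\lambda \subseteq T(Y)$ and $T(Y) \subseteq I_\lambda$ separately, establishing the second by a dimension count. For the forward containment, I would aim to exhibit each standard generator $e_r(x_S)$ of the Tanisaki ideal as $\top(f)$ for some explicit $f \in I(Y)$. The simplest case is a pigeonhole argument: if $a$ is a coordinate value of $P$ appearing with multiplicity $m$, and $S \subseteq \{1,\dots,n\}$ has size $s$ with $s + m > n$, then for every $\sigma \in S_n$ some coordinate of $\sigma(P)$ indexed by $S$ must equal $a$, so $\prod_{i \in S}(x_i - a) \in I(Y)$ and its top degree $e_s(x_S) = \prod_{i \in S} x_i$ lies in $T(Y)$. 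To reach lower-degree Tanisaki generators $e_r(x_S)$ with $r < s$, I would multiply together several products $\prod_{i \in S_j}(x_i - a_j)$ for pairwise distinct coordinate values $a_j$ of $P$ on disjoint index subsets $S_j$, tuning the sizes of the $S_j$ so that what comes out in top degree is a nonzero scalar multiple of $e_r(x_S)$ for any pair $(r,S)$ in the Tanisaki range.

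For the reverse containment I would use a dimension count. Lagrange interpolation gives $\dim_\bC A(Y) = |Y| = n!/\prod_i \lambda_i!$, and since $\widehat A(Y)$ is the associated graded of $A(Y)$, $\dim_\bC \bC[x_1,\dots,x_n]/T(Y) = |Y|$. Combined with the forward containment, it suffices to establish $\dim_\bC \bC[x_1,\dots,x_n]/I_\lambda \leq |Y|$. I would approach this by induction on $n$, peeling off the variable $x_n$: filter $\bC[x_1,\dots,x_n]/I_\lambda$ according to the possible values taken by $x_n$ on the orbit, and identify each successive quotient with a ring of the form $\bC[x_1,\dots,x_{n-1}]/I_{\mu}$ for $\mu$ obtained from $\lambda$ by removing a corner. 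The inductive hypothesis then bounds each piece, and the multinomial identity $|S_n\langle P\rangle| = \sum_\mu |S_{n-1}\langle P^{(\mu)}\rangle|$ (summing over coordinate values of $P$) telescopes to the desired bound.

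The technical heart, and the main obstacle, is this filtration step. One must verify that after specializing $x_n$ to a coordinate value $a$ of $P$, the image of $I_\lambda$ in $\bC[x_1,\dots,x_{n-1}]$ is precisely $I_{\mu(a)}$ where $\mu(a)$ is obtained from $\lambda$ by removing the appropriate corner. This compatibility between the partial elementary symmetric generators of the Tanisaki ideal and the corner-removal combinatorics of $\lambda$ is delicate: each generator $e_r(x_S)$ restricts to a combination of generators $e_{r}(x_{S\setminus\{n\}})$ and $e_{r-1}(x_{S\setminus\{n\}})$ depending on whether $n \in S$, and one must check that these combinations are exactly what is needed to produce the Tanisaki generators for the smaller partition. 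Making this precise is essentially the content of the Garsia--Procesi analysis.
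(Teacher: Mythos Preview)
The paper does not prove this theorem; it is quoted from Garsia--Procesi and used as a black box (the only place it is invoked is in Lemma~\ref{lemma:idealEquation}, via the identification $I_\lambda = T(S_n\langle P\rangle)$ and the monotonicity $I_\lambda \subseteq I_\mu$ for $\lambda \leq \mu$). So there is no proof in the paper to compare your proposal against.

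Your outline is essentially the Garsia--Procesi argument: the dimension count plus the inductive filtration by ``peeling off $x_n$'' is exactly how they establish the reverse containment, and you have correctly identified that the compatibility of the Tanisaki generators under corner removal is the technical crux. One genuine gap in your sketch is the forward containment for the lower-degree Tanisaki generators. Multiplying products $\prod_{i\in S_j}(x_i - a_j)$ over \emph{disjoint} index sets $S_j$ gives a polynomial whose top degree component is the monomial $\prod_{i\in \cup_j S_j} x_i$, i.e.\ $e_{|S|}(x_S)$ for $S = \bigcup_j S_j$, not $e_r(x_S)$ for $r < |S|$; no tuning of the sizes of the $S_j$ will change this. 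The actual argument produces $e_r(x_S)$ in top degree by a different mechanism: one writes down a polynomial that records, for each orbit point, the divisibility of $\prod_{i\in S}(t - x_i)$ into $\prod_{i=1}^n (t - P_i)$, and extracts coefficients in $t$. You should revisit that step.
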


Given a finite-dimensional $S_n$-module $V$, let $\Frob(V)$ be the \emph{Frobenius characteristic} of $V$, which is a symmetric function in an infinite set of variables $\{x_1,x_2,\dots\}$. Given a graded finite-dimensional $S_n$-module $V = \bigoplus_{i\geq 0}V_i$, let $\Frob_q(V) = \sum_{i\geq 0} q^i\Frob(V_i)$ be the \emph{graded Frobenius characteristic} of $V$, which is a symmetric function in variables $\{x_1,x_2,\dots\}$ with coefficients in $\bZ_{\geq 0}[q]$.
Let $s_\lambda(x)$ be the \emph{Schur function} associated to the partition $\lambda$, and let $\widetilde{H}_\lambda (x;q)$ be the \emph{modified Hall-Littlewood function}. Let $\leq$ be the usual \emph{dominance order} on partitions defined as follows: Given two partitions $\lambda$ and $\mu$ of $n$, we write $\lambda\leq \mu$ if and only if for all $i$, $\lambda_1+\cdots+ \lambda_i\leq \mu_1 + \cdots \mu_i$. See~\cite{Macdonald} for more details on symmetric functions and the Frobenius characteristic. 

Garsia and Procesi used Theorem~\ref{thm:GPthm} to prove the following.

\begin{theorem}[\cite{Garsia-Procesi}]
  Suppose $Y$ is as in the previous theorem. Then 
  \[
    \Frob_q(R_\lambda) = \Frob_q(\widehat A(Y)) =\widetilde H_{\lambda}(x;q).
  \]
\end{theorem}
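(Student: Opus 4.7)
The first equality $\Frob_q(R_\lambda) = \Frob_q(\widehat A(Y))$ is immediate from Theorem~\ref{thm:GPthm}: since $T(Y) = I_\lambda$ by that theorem, the two graded $S_n$-modules $\widehat A(Y) = \bC[x_1,\dots,x_n]/T(Y)$ and $R_\lambda = \bC[x_1,\dots,x_n]/I_\lambda$ are identical. The real content of the theorem is therefore the second equality,
\[
\Frob_q(\widehat A(Y)) = \widetilde H_\lambda(x;q).
\]

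The plan is to induct on $n$ by establishing matching recursions on the two sides. Write the distinct coordinate values of $P$ as $c_1,\dots,c_k$, and for each $i$ let $Y^{(i)} = \{y\in Y : y_n = c_i\}$. Viewed as a subset of $\bC^{n-1}$, $Y^{(i)}$ is an $S_{n-1}$-orbit of a point whose partition of coordinate multiplicities is $\lambda^{(i)}$, obtained from $\lambda$ by removing one box from the part corresponding to the multiplicity of $c_i$. In particular, the inductive hypothesis applies to each $\widehat A(Y^{(i)})$. On the symmetric-function side, the modified Hall--Littlewood polynomials $\widetilde H_\lambda(x;q)$ satisfy a recursion expressing $\widetilde H_\lambda$ as a $q$-weighted sum of the $\widetilde H_{\lambda^{(i)}}$ (induced from $S_{n-1}$ to $S_n$), which can be extracted from the recurrences in~\cite{Garsia-Procesi} or verified directly from the Macdonald/plethystic formalism.

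The key algebraic step is to construct an $S_{n-1}$-equivariant filtration
\[
0 = F_0 \subset F_1 \subset \cdots \subset F_k = \widehat A(Y)
\]
by homogeneous subspaces whose successive quotients are, up to explicit degree shifts $d_i$, the induced modules $\mathrm{Ind}_{S_{n-1}\times S_1}^{S_n} \widehat A(Y^{(i)})$. After choosing an ordering of the values $c_1,\dots,c_k$ matching the combinatorial recursion, one uses the polynomials of the form $\prod_{j<i}(x_n - c_j)$ together with their multiples in the other variables to carve out $F_i$. The total degrees of these separating polynomials are precisely the $d_i$ appearing as $q$-exponents on the symmetric-function side, and applying the inductive hypothesis to each factor then yields the desired equality.

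The main obstacle is that although $A(Y) \cong \bigoplus_i A(Y^{(i)})$ by the Chinese Remainder Theorem, this decomposition does not pass to the associated graded ring: the $\top$ operator mixes the components, so one must work with the filtration above rather than a direct-sum decomposition. The technical heart is thus to identify the top forms of the CRT idempotents and show that the associated graded of the filtration is exactly $\bigoplus_i \widehat A(Y^{(i)})$, suitably induced and shifted. The base case $n=1$ is trivial, with both sides equal to $s_{(1)}(x)$.
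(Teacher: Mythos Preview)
This theorem is not proved in the paper; it is quoted from \cite{Garsia-Procesi} without argument, so there is no in-paper proof to compare against. Your sketch is broadly in the spirit of the original Garsia--Procesi approach (induct on $n$ via a filtration coming from the last coordinate), but as written it contains an error and a genuine gap.

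The error: you describe an $S_{n-1}$-equivariant filtration of $\widehat A(Y)$ whose successive quotients are the \emph{induced} modules $\mathrm{Ind}_{S_{n-1}\times S_1}^{S_n}\widehat A(Y^{(i)})$. This cannot be right: the subquotients of an $S_{n-1}$-equivariant filtration carry only an $S_{n-1}$-action, and the dimension count fails anyway, since inducing multiplies each $\dim_\bC\widehat A(Y^{(i)})$ by $n$, whereas already $\sum_i \dim_\bC\widehat A(Y^{(i)}) = \dim_\bC\widehat A(Y)$. What the filtration by the polynomials $\prod_{j<i}(x_n - c_j)$ actually yields is an isomorphism of graded $S_{n-1}$-modules between $\mathrm{Res}^{S_n}_{S_{n-1}}\widehat A(Y)$ and $\bigoplus_i \widehat A(Y^{(i)})(-d_i)$; on Frobenius characteristics this is a recursion for $s_1^\perp\,\Frob_q(\widehat A(Y))$, not for $\Frob_q(\widehat A(Y))$ itself.

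The gap: that restriction recursion alone does not determine the $S_n$-module. The operator $s_1^\perp$ is not injective on degree-$n$ symmetric functions (for instance, the trivial and sign representations of $S_2$ have identical restrictions to $S_1$), so matching the recursions on the two sides is insufficient to conclude equality. Garsia and Procesi supply additional input --- notably an independent upper-triangularity statement for the Schur expansion of $\Frob_q(R_\lambda)$ and a normalization pinning down the remaining ambiguity --- which your plan does not mention. Without that extra ingredient the induction cannot be closed.
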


The associated graded rings $\widehat A(Y)$ for various sets of points $Y$ has recently appeared in several other contexts,  including spanning configuration spaces, cyclic sieving phenomena, and graded $S_n$-modules that generalize $R_\lambda$, see ~\cite{Chan-Rhoades, GriffinOSP, HRS1,OhRhoades, Brendon-SpanningSub, Rhoades-Wilson-Stirling}. See also the excellent survey of related work~\cite{Rhoades-book}, in which the study of orbit harmonics for general $S_n$-orbits $Y$ was proposed.

In this article, we consider the problem of computing the graded Frobenius characteristic of $\widehat A(Y)$ when $Y$ is the union of two distinct orbits, $Y = S_n\langle P,Q\rangle$. We state our main theorem, Theorem~\ref{thm:MainTheorem}, in the next section, which gives an expansion of the graded Frobenius characteristic of $\widehat A(Y)$ into modified Hall-Littlewood functions when the coordinate sums of $P$ and $Q$ are different. We show in Example~\ref{ex:NotPositive} that the graded Frobenius characteristic of the ring $\widehat A(Y)$ does not necessarily have a positive expansion into modified Hall-Littlewoods when the coordinate sums of $P$ and $Q$ are equal.

\section{Orbit harmonics for the union of two orbits}

\begin{lemma}\label{lemma:elementReps}
If $P\in \bC^n$, and $\widetilde f\in T(S_n\langle P\rangle)$ is a homogeneous polynomial of degree $d$, then $\widetilde f = \top(f)$ for some $f\in I(S_n\langle P\rangle)$ of degree $d$.
\end{lemma}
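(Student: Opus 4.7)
My plan is to unpack the ideal description $T(Y)=(\top(g):g\in I(Y))$, where $Y:=S_n\langle P\rangle$, lift a presentation of $\widetilde f$ as a sum of multiples of top-degree generators to a single element of $I(Y)$, and verify that its top component is $\widetilde f$. By definition of the ideal generated by a set, every element of $T(Y)$ has the form $\sum_{i=1}^N h_i\,\top(g_i)$ for some $g_i\in I(Y)$ and polynomials $h_i\in\bC[x_1,\ldots,x_n]$. I would begin with such an expression for $\widetilde f$.

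The first technical step is to homogenize the coefficients. Set $e_i:=\deg\top(g_i)$ and write $h_i=\sum_j h_i^{(j)}$ as a sum of homogeneous components. Since each $\top(g_i)$ is homogeneous of degree $e_i$, only the piece $h_i^{(d-e_i)}$ can contribute to degree $d$ on the right-hand side, and that degree-$d$ contribution must equal $\widetilde f$ because $\widetilde f$ is itself homogeneous of degree $d$. Replacing each $h_i$ by $h_i^{(d-e_i)}$ thus preserves the identity, so I may assume $h_i$ is homogeneous of degree $d-e_i$.

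With the homogenization in place, set $f:=\sum_i h_i g_i$, which lies in $I(Y)$. Writing $g_i=\top(g_i)+r_i$ with $\deg r_i<e_i$, each product $h_i g_i$ decomposes as $h_i\top(g_i)+h_i r_i$, where $h_i\top(g_i)$ is homogeneous of degree $d$ and $h_i r_i$ has degree strictly less than $d$, using that $h_i$ is homogeneous of degree $d-e_i$. Summing over $i$ therefore gives
\[
f \;=\; \widetilde f \;+\; (\text{terms of degree }<d).
\]
If $\widetilde f=0$ one can simply take $f=0$; otherwise $\widetilde f$ is the nonzero top component of $f$, so $\top(f)=\widetilde f$ and $\deg f=d$, as required.

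There is no substantive obstacle in this argument. The only subtlety is the homogenization step, which is precisely what blocks any leading-term cancellation that would drop the degree of $f$ below $d$. Since the argument never uses the orbit structure of $Y$ and goes through verbatim for any ideal of $\bC[x_1,\ldots,x_n]$, I expect the same reasoning will later be available for the union ideal $I(S_n\langle P,Q\rangle)$ whenever an analogous lifting is needed.
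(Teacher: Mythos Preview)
Your proof is correct and follows essentially the same route as the paper: both arguments write $\widetilde f$ as $\sum_i h_i\,\top(g_i)$, replace each coefficient $h_i$ by its homogeneous component of degree $d-\deg(g_i)$, and then observe that $f:=\sum_i h_i g_i\in I(Y)$ has $\widetilde f$ as its top-degree part. Your closing remark that nothing in the argument uses the orbit structure (so the lemma holds for any ideal) is accurate and worth noting.
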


\begin{proof}
Without loss of generality, let us assume $\widetilde f\neq 0$. By definition of $T(S_n\langle P\rangle)$,
  \[
    \widetilde f = \sum_i g^{(i)}\top(h^{(i)}),
  \]
  for some $h^{(i)}\in I(S_n\langle P\rangle)$ and $g^{(i)}\in \bC[x_1,\dots, x_n]$. Letting $m_i = \deg(h^{(i)})$, then
  \[
    \widetilde f = \sum_i (g^{(i)})_{d-m_i} \top(h^{(i)}).
  \]
  Let $I$ be the subset of indices such that $(g^{(i)})_{d-m_i}\neq 0$ (where the subscript means take the $d-m_i$ degree homogeneous component of $g^{(i)}$), then
  \[
    \widetilde f = \sum_{i\in I}(g^{(i)})_{d-m_i}\top(h^{(i)}).
  \]
  Since $\widetilde f \neq 0$, the $d$th degree component of $\sum_i (g^{(i)})_{d-m_i}h^{(i)}$ is nonzero, thus $\widetilde f$ is the top degree component of this sum,
  \[
    \widetilde f = \top\left(\sum_{i\in I} (g^{(i)})_{d-m_i}h^{(i)}\right).
  \]
  Since $\sum_{i\in I} (g^{(i)})_{d-m_i}h^{(i)}\in I(S_n\langle P\rangle)$, we are done.
\end{proof}

\begin{theorem}\label{thm:MainTheorem}
  Let $P,Q\in \bC^n$ such that the multiplicites of the coordinates of $P$ and $Q$ are $\lambda$ and $\mu$, respectively, and let $Y = S_n\langle P,Q\rangle$. Let $\Sigma(P) = \sum_i P_i$ and $\Sigma(Q) = \sum_i Q_i$. If $\lambda\leq \mu$ and $\Sigma(P)\neq \Sigma(Q)$, then
  \[
    \widehat A(Y) \cong_{S_n} R_\lambda \oplus R_\mu(-1)
  \]
  as graded $S_n$-modules, where here the (-1) means that $R_\mu$ is shifted up in degree by $1$. Equivalently, we have
  \[
    \Frob_q(\widehat A(Y)) = \widetilde H_{\lambda} + q\widetilde H_{\mu}.
  \]
\end{theorem}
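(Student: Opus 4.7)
The plan is to exhibit $\widehat A(Y)$ as an extension
\[
0 \to R_\mu(-1) \to \widehat A(Y) \to R_\lambda \to 0
\]
of graded $S_n$-modules, then split it degreewise via complete reducibility of $S_n$-modules in characteristic zero.

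First I would build the surjection $\pi\colon \widehat A(Y) \to R_\lambda$: because $I(Y) \subseteq I(S_n\langle P\rangle)$, any $\top(f)$ with $f \in I(Y)$ lies in $T(S_n\langle P\rangle) = I_\lambda$ by Theorem~\ref{thm:GPthm}, so $T(Y) \subseteq I_\lambda$ and $\pi$ is well-defined and surjective. Next I would introduce the degree-shifting map $\phi\colon R_\mu(-1) \to \widehat A(Y)$ induced by multiplication by $e_1 := x_1 + \cdots + x_n$. For well-definedness, given $\tilde h \in I_\mu$, Lemma~\ref{lemma:elementReps} yields $\tilde h = \top(h)$ for some $h \in I(S_n\langle Q\rangle)$ of the same degree; then $(e_1 - \Sigma(P))\,h \in I(Y)$ (each factor vanishes on one of the two orbits) has top $e_1 \tilde h$, placing $e_1 \tilde h \in T(Y)$. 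The same argument with $P, Q$ swapped shows $e_1 \in I_\lambda$, so the image of $\phi$ lands in $\ker \pi = I_\lambda / T(Y)$.

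The heart of the proof is showing that $\phi$ surjects onto $I_\lambda / T(Y)$, equivalently $I_\lambda \subseteq T(Y) + e_1\,\bC[x]$. This is where the dominance hypothesis enters, through the classical containment $I_\lambda \subseteq I_\mu$ of Tanisaki ideals. For $\tilde g \in (I_\lambda)_d$, use Lemma~\ref{lemma:elementReps} to lift $\tilde g = \top(f)$ with $f \in I(S_n\langle P\rangle)$ of degree exactly $d$. Viewing the class of $f$ in $A(S_n\langle Q\rangle)$ with its degree filtration, the symbol in the degree-$d$ piece of the associated graded $R_\mu$ equals $\tilde g \bmod I_\mu = 0$ (since $\tilde g \in I_\mu$), so the class of $f$ already admits a representative $f_1 \in \bC[x]$ with $\deg f_1 \leq d - 1$; i.e., $f - f_1 \in I(S_n\langle Q\rangle)$. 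Since $\Sigma(P) \neq \Sigma(Q)$, the element $\alpha := (\Sigma(P) - e_1)/(\Sigma(P) - \Sigma(Q)) \in I(S_n\langle P\rangle)$ is well-defined and equals $1$ on $S_n\langle Q\rangle$, so $F := f - \alpha f_1 \in I(Y)$ has degree at most $d$. Inspecting the degree-$d$ component of $F$ shows $\tilde g \in T(Y) + e_1\,\bC[x]_{d-1}$ whether or not $\deg F$ drops below $d$.

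To finish, since the orbits are disjoint (their coordinate sums differ), $\dim \widehat A(Y) = |Y| = \dim R_\lambda + \dim R_\mu$, so $\dim \ker \pi = \dim R_\mu$; combined with the surjectivity of $\phi$ onto $\ker \pi$, this forces $\phi$ to be injective and produces the SES, which splits degreewise to give $\widehat A(Y) \cong R_\lambda \oplus R_\mu(-1)$. Taking graded Frobenius characteristics yields $\widetilde H_\lambda + q \widetilde H_\mu$. The hardest step is the surjectivity argument: it requires both $\lambda \leq \mu$ (to invoke $I_\lambda \subseteq I_\mu$) and $\Sigma(P) \neq \Sigma(Q)$ (to produce $\alpha$), and an analogous attempt with the roles of $\lambda$ and $\mu$ reversed fails precisely because the reverse dominance generally does not hold.
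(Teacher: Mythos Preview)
Your proposal is correct and follows the same route as the paper: construct the sequence $R_\mu(-1)\to \widehat A(Y)\to R_\lambda\to 0$ via multiplication by $e_1$ and the natural quotient, prove exactness in the middle by showing $I_\lambda = T(Y) + (e_1)$, and then deduce injectivity on the left from the dimension count $|Y| = \dim R_\lambda + \dim R_\mu$. Even your key construction coincides with the paper's up to scalar: choosing $f_1 = f - g$ for some $g\in I(S_n\langle Q\rangle)$ with $\top(g)=\tilde g$, one computes $F = f - \alpha f_1 = \bigl(f(e_1-\Sigma(Q)) - g(e_1-\Sigma(P))\bigr)/(\Sigma(P)-\Sigma(Q))$, which is exactly the paper's auxiliary element $h$ divided by $\Sigma(P)-\Sigma(Q)$.
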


Before we prove the theorem, we need two lemmas relating the various ideals.
\begin{lemma}\label{lemma:idealProduct}
  Given $P,Q\in \bC^n$ and $Y = S_n\langle P,Q\rangle$,
  \[
    T(S_n\langle Q\rangle)(x_1+\dots+x_n)\subseteq T(Y).
  \]
\end{lemma}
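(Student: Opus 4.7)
The plan is to reduce to the case of a single homogeneous element and then exhibit an explicit element of $I(Y)$ whose top degree component picks up the extra factor $x_1+\cdots+x_n$. Since $T(S_n\langle Q\rangle)$ is a homogeneous ideal and $T(Y)$ is closed under addition, it suffices to prove that for every homogeneous $\widetilde g \in T(S_n\langle Q\rangle)$, the product $\widetilde g \cdot (x_1+\cdots+x_n)$ lies in $T(Y)$.

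Using Lemma~\ref{lemma:elementReps}, I would first write $\widetilde g = \top(g)$ for some $g \in I(S_n\langle Q\rangle)$ of the same degree $d$. The key observation is that for every permutation $\sigma$, $\sum_i \sigma(P)_i = \Sigma(P)$, so the affine linear polynomial $x_1+\cdots+x_n - \Sigma(P)$ vanishes on every point of $S_n\langle P\rangle$ and hence belongs to $I(S_n\langle P\rangle)$. Consider
\[
f := g \cdot (x_1+\cdots+x_n - \Sigma(P)).
\]
This polynomial vanishes on $S_n\langle Q\rangle$ because $g$ does, and on $S_n\langle P\rangle$ because the linear factor does, so $f \in I(Y)$. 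Expanding $f = g\cdot(x_1+\cdots+x_n) - \Sigma(P)\cdot g$, the degree-$(d+1)$ homogeneous part is $\widetilde g \cdot (x_1+\cdots+x_n)$ while everything else has degree at most $d$, so either $\top(f) = \widetilde g \cdot (x_1+\cdots+x_n)$ (giving the desired containment) or that product vanishes (in which case the containment is trivial).

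The only genuinely creative step is the choice of the multiplier $x_1+\cdots+x_n - \Sigma(P)$, which is engineered precisely so that it lies in $I(S_n\langle P\rangle)$ while having $x_1+\cdots+x_n$ as its top degree component. I do not expect any real obstacle here; the remaining work is bookkeeping on degrees. I note that the statement is symmetric-looking between $P$ and $Q$, but the argument intentionally multiplies an element of $I(S_n\langle Q\rangle)$ by a linear form tied to $\Sigma(P)$, so the interplay between the two orbits only appears through the final union in $I(Y)$.
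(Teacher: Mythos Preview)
Your argument is correct and matches the paper's proof almost verbatim: both multiply an element $g\in I(S_n\langle Q\rangle)$ by the linear form $x_1+\cdots+x_n-\Sigma(P)$ and read off the top degree. The only cosmetic difference is that the paper works directly with generators $\top(f)$ of $T(S_n\langle Q\rangle)$ rather than invoking Lemma~\ref{lemma:elementReps}, which is not needed here since checking on ideal generators already suffices.
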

\begin{proof}
Given any generator $\top(f)\in T(S_n\langle Q\rangle)$ for $f\in I(S_n\langle Q\rangle)$, the function 
\[(x_1+\dots+x_n-\Sigma(P))f\] vanishes on $S_n\langle P,Q\rangle$, so
  \[
    \top(f)\cdot (x_1+\dots+x_n) = \top((x_1+\dots+x_n-\Sigma(P))f) \in T(Y).
  \]
  Hence, we have the desired containment.
\end{proof}

\begin{lemma}\label{lemma:idealEquation}
  Given $P$, $Q$, and $Y$ as above, then
  \[
    T(S_n\langle P\rangle) = T(Y) + (x_1+\dots+x_n).
  \]
\end{lemma}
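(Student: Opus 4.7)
The plan is to prove the lemma in two stages: first establish the ungraded ideal identity
\[I(S_n\langle P\rangle) = (\sigma - \Sigma(P)) + I(Y),\]
where $\sigma = x_1 + \dots + x_n$, and then pass to top-degree ideals. The containment $\supseteq$ here is immediate because $\sigma - \Sigma(P)$ vanishes on $S_n\langle P\rangle$ and $I(Y) \subseteq I(S_n\langle P\rangle)$. For $\subseteq$, I would use that $\sigma - \Sigma(P)$ restricts to the \emph{nonzero} constant $\Sigma(Q) - \Sigma(P)$ on $S_n\langle Q\rangle$: given $f \in I(S_n\langle P\rangle)$, pick any polynomial $h$ (by Lagrange interpolation on the finite set $S_n\langle Q\rangle$) with $h|_{S_n\langle Q\rangle} = f|_{S_n\langle Q\rangle}/(\Sigma(Q) - \Sigma(P))$; then $f - (\sigma - \Sigma(P))h$ vanishes on all of $Y$ and so lies in $I(Y)$. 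This is precisely where the hypothesis $\Sigma(P) \neq \Sigma(Q)$ is used.

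For the graded identity, the containment $T(Y) + (\sigma) \subseteq T(S_n\langle P\rangle)$ is immediate: $T(Y) \subseteq T(S_n\langle P\rangle)$ follows from $I(Y) \subseteq I(S_n\langle P\rangle)$, and $\sigma = \top(\sigma - \Sigma(P)) \in T(S_n\langle P\rangle)$. For the reverse containment, I would apply Lemma~\ref{lemma:elementReps} (whose proof works verbatim for any ideal, not only $I(S_n\langle P\rangle)$) to the ideal $I(S_n\langle P\rangle) = (\sigma - \Sigma(P)) + I(Y)$. A homogeneous element of $T(S_n\langle P\rangle)$ of degree $d$ then equals $\top(F)$ for some $F = (\sigma - \Sigma(P))g + h$ with $g \in \bC[x_1, \dots, x_n]$, $h \in I(Y)$, and $\deg F = d$. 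Since $\top(\sigma - \Sigma(P)) = \sigma$, a short case analysis on the degrees of $(\sigma - \Sigma(P))g$ and $h$ shows that the degree-$d$ part of $F$ has the form $\sigma \cdot \top(g) + (\text{a homogeneous piece of }h)$, where the first summand lies in $(\sigma)$ and the second is either $0$ or $\top(h) \in T(Y)$. Thus $\top(F) \in T(Y) + (\sigma)$, as required.

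The main thing to get right is the case analysis at the end: one has to verify that any possible cancellation between the leading terms of $(\sigma - \Sigma(P))g$ and $h$ still leaves only contributions in $(\sigma)$ and in $T(Y)$, so that no stray degree-$d$ term escapes the ideal $T(Y) + (\sigma)$. Once the ungraded identity is set up, this degree bookkeeping is the only real content.
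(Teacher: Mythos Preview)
Your ungraded identity $I(S_n\langle P\rangle) = (\sigma - \Sigma(P)) + I(Y)$ is correct, and as you note it uses only $\Sigma(P)\neq\Sigma(Q)$. But precisely for that reason your argument is symmetric in $P$ and $Q$: the same reasoning would yield $T(S_n\langle Q\rangle) = T(Y) + (\sigma)$ as well. That is impossible unless $\lambda=\mu$, since always $T(Y)+(\sigma)\subseteq I_\lambda\cap I_\mu$, and for $\lambda\neq\mu$ this intersection cannot equal both $I_\lambda$ and $I_\mu$. The hypothesis $\lambda\leq\mu$ (part of ``as above'') is therefore essential, and your sketch never invokes it.

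The break occurs in the case analysis. From $F=(\sigma-\Sigma(P))g+h$ with $\deg F=d$ you have no control over $\deg g$ and $\deg h$ individually; cancellation above degree $d$ is possible. The degree-$d$ part of $F$ is then $\sigma\,g_{d-1}-\Sigma(P)\,g_d+h_d$, and neither $-\Sigma(P)\,g_d$ nor $h_d$ need lie in $(\sigma)+T(Y)$ (the latter because $h_d\neq\top(h)$ when $\deg h>d$). Trimming $g$ to degree $\leq d-1$ while keeping $h\in I(Y)$ would require a polynomial of degree $\leq d-1$ interpolating $F/(\Sigma(Q)-\Sigma(P))$ on $S_n\langle Q\rangle$, which is not available in general. (A baby model of the failure: with $J=(y^2-x)$ and $K=(y^2+x)$ one has $J+K=(y^2,x)$, yet $\mathrm{gr}(J)+\mathrm{gr}(K)=(y^2)\subsetneq(y^2,x)$.) The paper's proof supplies the missing asymmetric input via the Garsia--Procesi containment $I_\lambda\subseteq I_\mu$: given $\widetilde f=\top(f)\in I_\lambda\subseteq I_\mu$, Lemma~\ref{lemma:elementReps} produces $g\in I(S_n\langle Q\rangle)$ with $\top(g)=\widetilde f$, and then $h=f(\sigma-\Sigma(Q))-g(\sigma-\Sigma(P))\in I(Y)$ has $\top(h)$ equal to $(\Sigma(P)-\Sigma(Q))\widetilde f$ plus a multiple of $\sigma$, which lets one solve for $\widetilde f\in T(Y)+(\sigma)$.
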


\begin{proof}
  As in the Introduction, let $I_\lambda= T(S_n\langle P\rangle)$ and $I_\mu = T(S_n\langle Q\rangle)$, which are the Tanisaki ideals corresponding to $\lambda$ and $\mu$, respectively. By~\cite{Garsia-Procesi}, if $\lambda\leq \mu$, then $I_\lambda\subseteq I_\mu$.

  First, we prove that the containment ($\supseteq$) in the statement of the lemma holds. Since $S_n\langle P\rangle\subseteq S_n\langle P,Q\rangle$, we have that $I(S_n\langle P\rangle)\supseteq I(S_n\langle P,Q\rangle)$. Hence, we also have $T(S_n\langle P\rangle)\supseteq T(S_n\langle P,Q\rangle)$. Since the function $x_1+\dots+x_n-\Sigma(P)$ vanishes on $P$, we have that
  \begin{equation}\label{eq:idealContainment}
    T(S_n\langle P\rangle)\supseteq (x_1+\dots+x_n).
  \end{equation}
  Hence we have
  \[
    T(S_n\langle P\rangle)\supseteq T(S_n\langle P,Q\rangle) + (x_1+\dots+x_n).
  \]

  To prove the other containment, let $\top(f)\in T(S_n\langle P\rangle)$, where $f\in I(S_n\langle P\rangle)$. It suffices to show that it can be written as the sum of an element of $T(S_n\langle P,Q\rangle)$ and a multiple of $x_1+\dots+x_n$. If $\top(f)\in (x_1+\dots+x_n)$, then we are done, so suppose otherwise.

  By the containment $I_\lambda\subseteq I_\mu$, we have $\top(f)\in I_\mu$. Since $\top(f)$ is homogeneous, we have that $\widetilde f = \top(g)$ for some $g\in I(S_n\langle Q\rangle)$ by Lemma~\ref{lemma:elementReps}. Let
  \[
    h = f\left(\sum_i x_i - \Sigma(Q)\right) - g\left(\sum_i x_i-\Sigma(P)\right).
  \]
  Since $f$ vanishes on $S_n \langle P\rangle$ and $\sum_i x_i-\Sigma(Q)$ vanishes on $S_n \langle Q\rangle$, their product vanishes on $S_n\langle P,Q\rangle$, and similarly for the second term in $h$. Hence, $h \in I(S_n\langle P,Q\rangle)$ so $\top(h)\in T(S_n\langle P,Q\rangle)$.

  Since $\top(f) = \top(g) =\widetilde f$, the $(d+1)$st degree component of $h$ vanishes. The $d$th degree component of $h$ is
  \begin{align}\label{eq:toph}
    (f_{d-1}-g_{d-1})\left(\sum_i x_i\right) + \widetilde f(-\Sigma(Q)+\Sigma(P)).
  \end{align}
  If this vanishes, then we would have $\widetilde f \in (x_1+\dots+x_n)$, contradicting our earlier assumption. Therefore, the $d$th degree component of $h$ must not vanish and $\top(h)$ must equal~\eqref{eq:toph}. Since $\Sigma(P)\neq \Sigma(Q)$, we can solve for $\widetilde f$ to get
  \[
    \widetilde f = \frac{1}{\Sigma(P)-\Sigma(Q)} (\top(h) - (f_{d-1}-g_{d-1})(x_1+\dots+x_n))
  \]
which is clearly an element of $T(S_n\langle P,Q\rangle) + (x_1+\dots + x_n)$, and we are done.
\end{proof}

Now we will use the lemmas to prove the theorem.

\begin{proof}[Proof of Theorem~\ref{thm:MainTheorem}]
  By Lemma~\ref{lemma:idealProduct}, we have a well-defined homomorphism
  \[
    \frac{\bC[x_1,\dots,x_n]}{T(S_n\langle Q\rangle)}\to \frac{\bC[x_1,\dots,x_n]}{T(Y)}
  \]
  induced from multiplication by $x_1+\dots+x_n$. Furthermore, we have an exact sequence
  \[
    \frac{\bC[x_1,\dots,x_n]}{T(S_n\langle Q\rangle)}\to \frac{\bC[x_1,\dots,x_n]}{T(Y)}\to \frac{\bC[x_1,\dots,x_n]}{T(S_n\langle P\rangle)}\to 0,
  \]
  where the second map is a quotient map induced by the ideal containment \eqref{eq:idealContainment}. Indeed, the sequence is exact at $\bC[x_1,\dots,x_n]/T(Y)$ by Lemma~\ref{lemma:idealEquation}. But since $\dim_\bC(R_\mu) + \dim_\bC(R_\lambda) = |Y| = \dim_\bC(\widehat A(Y))$, the first map must also be an injection, so we have an exact sequence of graded $\bC[x_1,\dots,x_n]$-modules,
  \[
    0\to R_\mu(-1)\to \widehat A(Y)\to R_\lambda\to 0.
  \]
  But this is also clearly an exact sequence of graded $S_n$-modules. Over $\bC$, any exact sequence of graded $S_n$-modules splits, so we have the desired isomorphism.
\end{proof}

\begin{example}\label{ex:NotPositive}
In the case where $\Sigma(P) = \Sigma(Q)$, the graded Frobenius characteristic of $\widehat A(Y)$ does not necessarily expand positively in the modified Hall-Littlewood function basis. For example, when $P = (0,0,4)$, $Q = (1,1,2)$, and $Y = S_3\langle P,Q\rangle$, we have computed in Sage~\cite{sage} that
\[
\Frob_q(\widehat A(Y)) = t^2(s_{2,1} + s_3) + ts_{2,1} + s_3 = (t+1)\widetilde H_{2,1} + (t^2-t) \widetilde H_{3},
\]
which contains the term $-t\widetilde H_3$.
\end{example}

\section{Acknowledgements}

Thanks to Jake Levinson for several helpful conversations pertaining to this work.

\bibliographystyle{hsiam}
\bibliography{Springer.bib}

\end{document}